\newtheorem{theorem}{Theorem}
\newtheorem{definition}{Definition}
\newtheorem{example}{Example}
\newcommand{\R}{\mathbb{R}}
\title{Deep Hamiltonian networks based on symplectic integrators}
\author[1,2]{Aiqing Zhu}
\author[1,2]{Pengzhan Jin}
\author[1,2,*]{Yifa Tang}
\affil[1]{LSEC, ICMSEC, Academy of Mathematics and Systems Science, Chinese Academy of Sciences, Beijing 100190, China}
\affil[2]{School of Mathematical Sciences, University of Chinese Academy of Sciences, Beijing 100049, China}
\affil[*]{Email address: tyf@lsec.cc.ac.cn}
\date{}
\begin{document}

\maketitle

\begin{abstract}

HNets is a class of neural networks on grounds of physical prior for learning Hamiltonian systems. This paper explains the influences of different integrators as hyper-parameters on the HNets through error analysis. If we define the network target as the map with zero empirical loss on arbitrary training data, then the non-symplectic integrators cannot guarantee the existence of the network targets of HNets. We introduce the inverse modified equations for HNets and prove that the HNets based on symplectic integrators possess network targets and the differences
between the network targets and the original Hamiltonians depend on the accuracy orders of the integrators. Our numerical experiments show that the phase flows of the Hamiltonian systems obtained by symplectic HNets do not exactly preserve the original Hamiltonians, but preserve the network targets calculated; the loss of the network target for the training data and the test data is much less than the loss of the original Hamiltonian; the symplectic HNets have more powerful generalization ability and higher accuracy than the non-symplectic HNets in addressing predicting issues. Thus, the symplectic integrators are of critical importance for HNets.
\end{abstract}

\begin{keywords}
Neural networks, HNets, Network target, Inverse modified equations, Symplectic integrator, Error analysis.
\end{keywords}

\section{Introduction}
Dynamical systems play a critical role in shaping our understanding of the physical world. And recent line of works bridged the connection between dynamical systems and deep neural networks. It is widely studied to analyze neural networks from the perspective of dynamic systems \cite{weinan2017proposal, pascanu2013difficulty, ruthotto2019deep}. And researchers make an effort to employ deep learning to dynamical systems recently \cite{lutter2019deep, raissi2018hidden,raissi2017inferring}. In particular, neural networks have been applied to solve differential equations \cite{weinan2018deep,pang2019fpinns,raissi2019physics,zhang2019quantifying}. With the explosive growth of available data and computing resources, current papers focus on discovery sufficiently accurate models of dynamical systems directly from data.

A good physics model could predict changes in a system over time. In particular, our goal is discovery of dynamics systems on grounds of remarkable generalization ability of neural networks. In this task, multistep neural networks introduce a novel approach to nonlinear systems identification that combines the classical multi-step methods with deep neural networks \cite{raissi2018multistep}. ODENets based on general ODE solver, in contrast, propose using adjoint equation instead of back-propagating through ODE solver \cite{chen2018neural}.

The problem with extant methods is that they tend not to learn conservation laws. This often causes them to drift away from the true dynamics of the system as errors accumulate \cite{greydanus2019hamiltonian}. Hamiltonian system is one of the expressions of classical mechanics and has been applied to a wide range of physics fields from celestial mechanics to quantum field theory \cite{arnold2007mathematical, reichl1999modern, sakurai1995modern}, and there are also important applications for machine learning \cite{bertalan2019learning, li2019neural,rezende2019equivariant,sanchez2019hamiltonian, toth2019hamiltonian}. Hamiltonian system is in the form
\begin{equation}\label{eq:Hami}
\dot{y} = J^{-1} \nabla H(y), \quad J=\begin{pmatrix} 0 & I_d \\ -I_d & 0\end{pmatrix},
\end{equation}
where $y \in  \R^{2d}$, $I_d \in \R^{d \times d}$ is the $d$-by-$d$ identity matrix. The scalar function $H(y)$ is called the Hamiltonian \cite{arnol2013mathematical}. In order to learn Hamiltonian systems, \cite{greydanus2019hamiltonian} proposes the HNet to learn a parametric function for $H(y)$. \cite{chen2019symplectic} improves HNet for separable Hamiltonian as SRNN, and it numerically confirms that HNets based on symplectic integrators perform better than the ones based on non-symplectic integrators.

For the numerical solution of the Hamiltonian system, symplectic integrator has a unique and irreplaceable advantage, especially for the long-term tracking of the system and the conservation of invariant. Pioneering work on symplectic integration is due to Kang Feng \cite{feng1984difference}, and this direction has been extensively studied and has achieved extremely fruitful results \cite{feng1986difference,feng1995collected,feng2010symplectic,hairer2006geometric,koch2007dynamical,lubich2008quantum,sanz2018numerical}. Symplectic integators solve the long-term calculation of dynamic systems, and have also been successfully applied in diverse fields of science and engineering \cite{faou2009computing,omelyan2003symplectic,qin2015canonical,zhang2014canonicalization,zhu2016splitting}.

Following are the definitions of symplectic map and symplectic integrator.
\begin{definition}
A differentiable map $g : U \rightarrow \R^{2d}$ (where $U\subseteq \R^{2d}$ is an open set) is called symplectic if
\begin{equation*}
g'(y)^{T}Jg'(y)=J,
\end{equation*}
where $g'(y)$ is the Jacobian of $g(y)$.
\end{definition}
In 1899, Poincare proved that the flow of the Hamiltonian system is a symplectic map \cite[Chapter \uppercase\expandafter{\romannumeral6}.2]{hairer2006geometric}, i.e.,
\begin{equation*} \label{eq:sym_con}
\left(\frac{\partial \phi_t}{\partial y_0}\right)^T J \left(\frac{\partial \phi_t}{\partial y_0}\right) = J,
\end{equation*}
where $\phi_{t}(y_0)$ is the flow of (\ref{eq:Hami}) starting from$y_0$ by time $t$.
\begin{definition}
An integrator $y_1 = \Phi_h(y_0)$ is called symplectic if the one-step map $\Phi_h(y)$ is symplectic whenever the integrator is applied to a smooth Hamiltonian system.
\end{definition}
In this paper, symplectic Euler method and implicit  midpoint rule are both symplectic, while explicit Euler method and implicit trapezoidal rule are both non-symplectic. More information about the symplectic integrator refers to \cite{feng2010symplectic}.

The recent study has verified the importance of symplectic integrators in HNets by numerical experiments \cite{chen2019symplectic}, but the theoretical understanding is still lagging behind. The core of this work is to build the backward error analysis of HNets. We introduce the target error and the network target. The target error is the difference between the network target and the true target, where the network target is defined as the map with zero empirical loss on arbitrary training data. In addition, the \textbf{inverse modified equation} is proposed to calculate the network target. It is proved that the HNets based on the symplectic integrators possess network targets while the non-symplectic integrators cannot guarantee the existence of the network targets. We also perform the experiments to confirm the theoretical results later.

The paper is organized as follows. Section \ref{sec:nt_ime} introduces the concepts of the target error and the network target, furthermore, proposes the inverse modified equations for backward analysis. Section \ref{sec:num_res} presents the numerical results of the target error and the prediction of the phase flows of the Hamiltonian systems on grounds of HNets. Some conclusions will be given in the last section.

\section{Network targets and inverse modified equations}
\label{sec:nt_ime}
\subsection{Target error}
The neural networks, as universal approximators \cite{barron1993universal,cybenko1989approximation,hornik1989multilayer}, can approximate essentially any function. First we show the definition of the target error.
\begin{definition}
The network target (NT) is the map with zero empirical loss on arbitrary training data. The true target (TT) is the map expected to be approached. The difference between them is called the target error (TE).
\end{definition}
To approximate the function $f(y)$, the loss function is generally defined as
\begin{equation}\label{loss:gen}
\frac{1}{|\mathcal{T}|}\sum_{(y_i,f(y_i)) \in \mathcal{T}} \|net(y_i)-f(y_i)\|,
\end{equation}
where $\mathcal{T} = \{(y_i,f(y_i)\}_{i=1}^{N}$ is the training dataset. It is clear that in this case the network target and the true target are both $f(y)$, i.e., the network $net(y)$ is an approximation of $f$ but not other functions. However, the network target is not the same as the true target for some networks with priors, and there is even no network target.

Multistep neural network (MNN) \cite{raissi2018multistep}, and Hamiltonian neural network (HNet) \cite{greydanus2019hamiltonian}, are two examples of non-zero target errors. In consideration of the ordinary differential equation
\begin{equation}\label{eq:ODE}
\dot{y}=f(y),
\end{equation}
where $y \in \R^{n}$. MNN, whose true target is $f(y)$, proceeds by applying a linear multistep method to
\begin{equation*}\label{loss:ODENets}
\|\frac{dy}{dt} - net(y)\|
\end{equation*}
and obtain the loss function. For instance, the loss function of MNN based on explicit Euler method $\Phi_h(f,y)= y + h f(y)$ is
\begin{equation*}
\frac{1}{|\mathcal{T}|}\sum_{(y_n,\phi_h(y_n)) \in \mathcal{T} }\| \frac{\phi_h(y_n)-y_n}{h} - net(y_n)\|,
\end{equation*}
where $\phi_h(y)$ is the exact flow of equation (\ref{eq:ODE}) and $\mathcal{T} = \{(y_i,\phi_h(y_i)\}_{i=1}^{N}$ is the training data. And the network target satisfies
\begin{equation*}
NT(y)=\frac{\phi_h(y)-y}{h}=f(y)+\frac{h}{2}f'(y)f(y)+\frac{h^2}{6}(f''(y)(f(y),f(y))+f'(y)f'(y)f(y))+ \cdots ,
\end{equation*}
where $f(y)$ is a vector-valued function whose higher-order derivatives are tensors. Thus the target error of MNN based on explicit Euler method can be expressed as
\begin{equation*}
NT(y)-f(y) = \frac{h}{2}f'(y)f(y)+\frac{h^2}{6}(f''(y)(f(y),f(y))+f'(y)f'(y)f(y))+ \cdots.
\end{equation*}
\begin{figure}[htbp]
    \centering
    \includegraphics[width=0.98\textwidth]{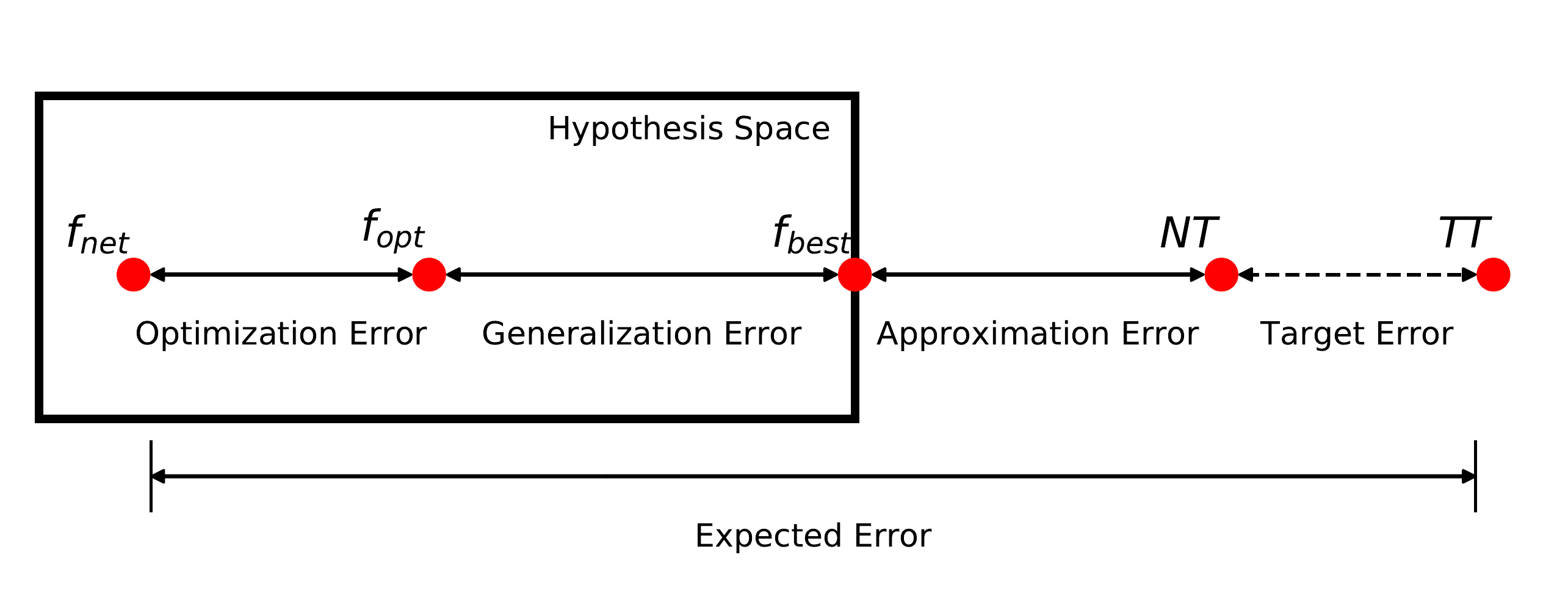}
    \caption{\textbf{Illustration of expected error. }$f_{net}$ is the function by training a neural network, $f_{opt}$ is the neural network whose loss is at a global minimum, $f_{best}$ is the function closest to $NT$ in the hypothesis space, $NT$ is the network target and $TT$ is the true target. The expected error consists of four parts, of which optimization error, generalization error and approximation error are the main objects of classic neural network error analysis, and the final target error is usually zero so that it is often ignored. When $NT$ is different from $TT$, sufficient training, a great quantity of data and large network size can effectively reduce the errors of the first three, consequently the target error will become the main part of the expected error.
    }
    \label{fig:error_types}
\end{figure}

The expected error mainly depends on optimization error, generalization error and approximation error, while the target error is usually zero so that it is often ignored, as shown in Fig.~\ref{fig:error_types}. There have been numerous studies that analyze the optimization, generalization and approximation errors \cite{bottou2010large,bottou2008tradeoffs,cybenko1989approximation,hornik1989multilayer,jin2019quantifying,lee2016gradient,poggio2017theory}, but the target error is lagging behind. When neural networks are used to learn dynamic systems, sufficient data, developed optimization techniques as well as powerful approximation capabilities, make the target error a major part of expected error. That is what we should focus on.

The non-symplectic integrators cannot guarantee the existence of the network targets of HNets. For instance, if the chosen numerical integrator is explicit Euler method, the loss function is
\begin{equation*}
\frac{1}{|\mathcal{T}|}\sum_{(y_n,\phi_h(y_n)) \in \mathcal{T} }\| \frac{\phi_h(y_n)-y_n}{h} -J^{-1} \nabla net(y_n)\|
\end{equation*}
with exact flow $\phi_h(y)$ and training data $\mathcal{T}$, then the network target is subject to
\begin{equation*}
\nabla NT(y) = J \frac{\phi_h(y)-y}{h}.
\end{equation*}
However, not every vector-valued function is the gradient of another scalar function, that means the network target $NT$ may not exist. As shown in Fig.~\ref{fig:error_types}, the absence of network targets makes classic error analysis no longer applicable. This work will prove the existence of network targets of HNets based on the symplectic integrators.

\subsection{Inverse modified equation}
Consider an ordinary differential equation
\begin{equation}\label{eq:ODE2}
\dot{y} = f(y)
\end{equation}
and a numerical integrator $\Phi_h(f, y)$ which produces the numerical approximations as $$y_0=y(0),\quad y_{i+1}=\Phi_h(f, y_{i}).$$
The idea of modified differential equation is to search for a equation of the form
\begin{equation*}\label{eq:Modieq}
\dot{\bar{y}}=\bar{f_h}(\bar{y}),
\end{equation*}
such that $\bar{y}(nh)=y_n$. In contrast, now we are aiming to search for an inverse modified differential equation of the form
\begin{equation}\label{eq:inmodieq}
f_h(\tilde{y})=f_1(\tilde{y})+hf_2(\tilde{y})+h^2f_3(\tilde{y})+\cdots
\end{equation}
such that $\tilde{y}_n = y(nh)$ for $\tilde{y}_{i+1}=\Phi_h(f_h, \tilde{y}_{i})$ and the exact solution $y(t)$ of (\ref{eq:ODE2}). Consequently, the inverse modified differential equation is indeed the network target of the multi-step network.

For the computation of (\ref{eq:inmodieq}), we expand the solution of (\ref{eq:ODE2}) into a Taylor series with respect to time step $h$:
\begin{equation}\label{eq:exasolu}
\phi_h(f,y)=y+hf(y)+\frac{h^2}{2}f'f(y)+\frac{h^3}{6}(f''(f,f)(y)+f'f'f(y))+\cdots .
\end{equation}
Moreover, assume that the numerical integrator $\Phi_h(f_h, y)$ can be expanded as
\begin{equation}\label{eq:numsolu}
\Phi_h(f_h, y) = y + hd_1(f_h,y) + h^2d_2(f_h,y) + h^3d_3(f_h,y) + \cdots,
\end{equation}
where the functions $d_j$ are given and typically composed of $f_h$ and its derivatives. In order to achieve $\tilde{y}_n = y(nh)$, it should be satisfied that $\Phi_h(f_h, y)=\phi_h(f,y)$. Now plugging (\ref{eq:inmodieq}) into (\ref{eq:numsolu}), and we can easily obtain the expressions of $f_i$ in (\ref{eq:inmodieq}) by comparing like powers of $h$ in (\ref{eq:exasolu}) and (\ref{eq:numsolu}).
\begin{example}
The implicit midpoint rule
\begin{equation*}
\Phi_h(f_h, y)=y+hf_h(\frac{\Phi_h(f_h, y)+y}{2}),
\end{equation*}
could be expanded as
\begin{equation*}
\begin{aligned}
\Phi_h(f_h, y)=&y+hf_h(y)+ hf_h'(y)\frac{\Phi_h(f_h, y)-y}{2} + \frac{h}{2}f_h''(y)(\frac{\Phi_h(f_h, y)-y}{2},\frac{\Phi_h(f_h, y)-y}{2}) + \cdots\\
 =& y+hf_h(y) + h^2\frac{f_h'f_h(y)}{2} + h^3(\frac{f_h''(f_h,f_h)(y)}{8} + \frac{f_h'f_h'f_h(y)}{4})+\cdots\\
 =& y + hf_1+h^2(f_2(y)+ \frac{1}{2}f_1'f_1)\\
&+h^3(f_3(y)+\frac{1}{2}f_2'f_1(y)+\frac{1}{2}f_1'f_2(y)+\frac{1}{4}f_1'f_1'f_1(y)+\frac{1}{8}f_1''(f_1,f_1)(y))+\cdots .
\end{aligned}
\end{equation*}
Comparing like powers of $h$ in the expression (\ref{eq:exasolu}) and the above yields recurrence relations for functions $f_j$, namely,
\begin{equation*}
\begin{aligned}
f_1(y) &= f(y)\\
f_2(y) &= \frac{1}{2}f'f(y)-\frac{1}{2}f_1'f_1(y) = 0\\
f_3(y) &= \frac{1}{6}(f''(f,f)(y)+f'f'f(y)) - (\frac{1}{2}f_2'f_1(y)+\frac{1}{2}f_1'f_2(y)+\frac{1}{4}f_1'f_1'f_1(y)+\frac{1}{8}f_1''(f_1,f_1)(y))\\
& = -\frac{1}{12}f'f'f(y)+\frac{1}{24}f''(f,f)(y))\\
& \vdots
\end{aligned}
\end{equation*}
\end{example}
We only do formal analysis without taking care of convergence issues in this work.
\begin{theorem}\label{the:modiode}
Suppose that the integrator $\Phi_h(f,y)$ is of order $p$, more precisely,
\begin{equation*}
\Phi_h(f,y)=\phi_h(f,y)+h^{p+1}\delta_{p+1}(f,y)+O(h^{p+2}),
\end{equation*}
where $\phi_h(f,y)$ denotes the exact flow of $\dot{y}=f(y)$, and $h^{p+1}\delta_{p+1}(f,y)$ is the leading term of the local truncation. The inverse modified equation satisfies
\begin{equation*}
\dot{\tilde{y}}=f_h(\tilde{y})=f(\tilde{y})+h^pf_{p+1}(\tilde{y})+\cdots,
\end{equation*}
where $f_{p+1}(y)=-\delta_{p+1}(f,y)$.
\end{theorem}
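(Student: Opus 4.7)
The plan is to determine the coefficients in the ansatz $f_h = f_1 + h f_2 + h^2 f_3 + \cdots$ by imposing the matching condition $\Phi_h(f_h, y) = \phi_h(f, y)$ and comparing like powers of $h$, exactly as done for the implicit midpoint rule in the example immediately preceding the theorem. The one structural ingredient I will repeatedly use is a sensitivity estimate for $\Phi_h$ in its first argument: starting from (\ref{eq:numsolu}), for any smooth $g$ and any integer $m \geq 0$,
\[
\Phi_h(f + h^m g, y) = \Phi_h(f, y) + h^{m+1} g(y) + O(h^{m+2}).
\]
This is because the leading contribution of the perturbation $h^m g$ to the argument is carried only by the first-order term $h f_h(y)$ of $\Phi_h$, while every higher term $h^j d_j(f_h, y)$ with $j \geq 2$ contributes a correction of order $h^{j+m} \in O(h^{m+2})$ after substituting $f_h = f + h^m g$ and Taylor-expanding $d_j$ in its first argument.

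Granted this estimate, matching at order $h^1$ is immediate from consistency: both $\Phi_h(f_h, y)$ and $\phi_h(f, y)$ equal $y + h f_1(y) + O(h^2)$ and $y + h f(y) + O(h^2)$ respectively, so $f_1 = f$. I would then show inductively that $f_2 = \cdots = f_p = 0$. Fix $2 \leq k \leq p$ and assume $f_1 = f$, $f_2 = \cdots = f_{k-1} = 0$, so that $f_h = f + h^{k-1} f_k + O(h^k)$. Applying the sensitivity estimate with $m = k-1$ and $g = f_k$ gives
\[
\Phi_h(f_h, y) = \Phi_h(f, y) + h^k f_k(y) + O(h^{k+1}).
\]
Since the integrator has order $p$ and $k \leq p$, the hypothesis $\Phi_h(f, y) - \phi_h(f, y) = h^{p+1}\delta_{p+1}(f, y) + O(h^{p+2})$ is $O(h^{k+1})$, whence $\Phi_h(f_h, y) = \phi_h(f, y) + h^k f_k(y) + O(h^{k+1})$, and the matching condition at order $h^k$ forces $f_k = 0$.

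With $f_1 = f$ and $f_2 = \cdots = f_p = 0$ in hand, the ansatz becomes $f_h = f + h^p f_{p+1} + O(h^{p+1})$. A final application of the sensitivity estimate with $m = p$ and $g = f_{p+1}$, together with the hypothesis on $\Phi_h$, yields
\[
\Phi_h(f_h, y) = \Phi_h(f, y) + h^{p+1} f_{p+1}(y) + O(h^{p+2}) = \phi_h(f, y) + h^{p+1}(\delta_{p+1}(f, y) + f_{p+1}(y)) + O(h^{p+2}),
\]
so that equating with $\phi_h(f, y)$ at order $h^{p+1}$ forces $f_{p+1}(y) = -\delta_{p+1}(f, y)$, exactly as claimed.

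The main obstacle I anticipate is simply the careful justification of the sensitivity identity, which implicitly requires expanding each $d_j$ in (\ref{eq:numsolu}) around the unperturbed vector field $f$ and checking that the resulting corrections, multiplied by their $h^j$ prefactors, are absorbed into $O(h^{m+2})$. This is routine bookkeeping for any standard one-step integrator, and the paper's explicit restriction to formal power-series analysis means no convergence questions arise.
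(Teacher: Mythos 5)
Your proof is correct, and it reaches the conclusion by a genuinely different decomposition than the paper's. The paper applies the order-$p$ hypothesis to the modified field itself, writing $\Phi_h(f_h,y)=\phi_h(f_h,y)+h^{p+1}\delta_{p+1}(f_h,y)+O(h^{p+2})$, then Taylor-expands the exact flow $\phi_h(f_h,y)$ via (\ref{eq:exasolu}) with $f$ replaced by $f_h$, substitutes the ansatz (\ref{eq:inmodieq}), and compares powers of $h$ against $\phi_h(f,y)$; this route needs the auxiliary observation that $\delta_{p+1}(f_h,y)=\delta_{p+1}(f,y)+O(h)$ once $f_1=f$ is established. You instead perturb around $\Phi_h(f,y)$, invoking the order hypothesis only at the unmodified field $f$ and carrying all dependence on $f_h$ through the sensitivity estimate $\Phi_h(f+h^mg,y)=\Phi_h(f,y)+h^{m+1}g(y)+O(h^{m+2})$. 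Both are ultimately "compare like powers of $h$" arguments, but yours makes the induction $f_2=\cdots=f_p=0$ explicit and cleanly isolates exactly where the order hypothesis enters, at the cost of having to justify the sensitivity identity --- which rests on $d_1(g,y)=g(y)$ (itself forced by consistency, since $p\geq 1$) and on each $d_j$ in (\ref{eq:numsolu}) depending smoothly on its vector-field argument; both points are unproblematic at the formal power-series level the paper explicitly adopts. The paper's version is terser and mirrors the classical derivation of (forward) modified equations, but leaves the power-matching bookkeeping implicit; your version is the more self-contained of the two.
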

\begin{proof}
\begin{equation*}
\begin{aligned}
\phi_h(f,y) =& \Phi_h(f_h,y)\\
=& \phi_h(f_h,y) + h^{p+1} \delta_{p+1}(f_h,y)+O(h^{p+2})\\
=&  h^{p+1} \delta_{p+1}(f_h,y) +O(h^{p+2})+ y+hf_h(y)+ \\
&\frac{h^2}{2}f_h'(y)f_h(y)+\frac{h^3}{6}(f_h''(f_h,f_h)(y)+f_h'f_h'f_h(y))+\cdots.
\end{aligned}
\end{equation*}
Inserting (\ref{eq:inmodieq}) and (\ref{eq:exasolu}) into it and comparing the coefficient of the first power of $h$ yields $f_1 = f$. Thus $\delta_{p+1}(f_h,y) = \delta_{p+1}(f,y) + O(h)$. Furthermore, comparing like powers of $h$ yields $f_2=f_3= \cdots =f_p=0$ and $f_{p+1}=-\delta_{p+1}(f,y)$.
\end{proof}
The above theorem shows that the high-order integrator can effectively reduce the target error. The network target of HNet is the Hamiltonian of the inverse modified equation, nevertheless, the non-symplectic integrators cannot guarantee the inverse modified equation being a Hamiltonian system. And we point out that the inverse modified equation based on the symplectic integrator is still a Hamiltonian system.
\begin{theorem}\label{the:modiHam}
If a symplectic integrator $\Phi_h(y)$ is applied to a Hamiltonian system with a smooth Hamiltonian $H$, then the inverse modified equation (\ref{eq:inmodieq}) is also a Hamiltonian system. More precisely, there exist smooth functions $H_j$, $j=1,2,3\cdots$, such that
\begin{equation*}f_j(y)=J^{-1}\nabla H_j(y).\end{equation*}
\end{theorem}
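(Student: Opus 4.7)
The strategy is an induction on $j$, modeled on the classical backward error analysis for modified equations (cf.\ Hairer, Lubich and Wanner, Ch.~IX), but with the roles of the exact flow $\phi_h$ and the integrator $\Phi_h$ swapped. The base case $j=1$ is handled by Theorem~\ref{the:modiode}: $f_1=f=J^{-1}\nabla H$, so $H_1:=H$ works. For the inductive step, suppose $f_1,\ldots,f_{r-1}$ are Hamiltonian vector fields with smooth Hamiltonians $H_1,\ldots,H_{r-1}$; the task is to produce $H_r$.

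First I would introduce the truncated vector field $\tilde f_h^{[r-1]}:=f_1+hf_2+\cdots+h^{r-2}f_{r-1}$. By the inductive hypothesis, this is the Hamiltonian vector field of the smooth Hamiltonian $H_1+hH_2+\cdots+h^{r-2}H_{r-1}$, so $\Phi_h(\tilde f_h^{[r-1]},\cdot)$ is symplectic because $\Phi_h$ is a symplectic integrator applied to a Hamiltonian system. Next, using the expansion (\ref{eq:numsolu}) together with the defining matching relations for $f_h$, a careful book-keeping of powers of $h$ should yield
\begin{equation*}
\phi_h(f,y)=\Phi_h(\tilde f_h^{[r-1]},y)+h^{r}f_r(y)+O(h^{r+1}).
\end{equation*}
The point is that the $h^r$-contribution of $f_r$ to $\Phi_h(f_h,\cdot)$ comes solely from the consistency term $h\,d_1(f_h,y)=hf_h(y)$, while any $d_j$ with $j\geq 2$ contributes $f_r$-dependent terms only at order $h^{r+1}$ or higher.

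Both sides are now symplectic up to $O(h^{r+1})$: $\phi_h(f,\cdot)$ by Poincar\'e's theorem, and $\Phi_h(\tilde f_h^{[r-1]},\cdot)$ by the paragraph above. Differentiating in $y$, using $M^{T}JM=J$ for both Jacobians and $\partial_y\Phi_h(\tilde f_h^{[r-1]},y)=I+O(h)$, and matching the $O(h^r)$-terms, I expect to obtain
\begin{equation*}
f_r'(y)^{T}J+Jf_r'(y)=0,
\end{equation*}
i.e.\ $Jf_r'(y)$ is symmetric. By Poincar\'e's lemma, $Jf_r=\nabla H_r$ on any simply connected domain, so $f_r=J^{-1}\nabla H_r$, closing the induction.

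The main obstacle will be the expansion step: cleanly extracting the $O(h^r)$ increment when $\tilde f_h^{[r-1]}$ is upgraded to $\tilde f_h^{[r]}$ requires tracking how $f_r$ enters each $d_j$ in (\ref{eq:numsolu}) and verifying that only the $h\,d_1$ term contributes at order $h^r$. A secondary issue is globality: Poincar\'e's lemma produces $H_r$ only locally, but since the recurrence defining $f_r$ is polynomial in $f$ and its derivatives and $H_1=H$ is globally defined on the phase space, the local Hamiltonians glue into a single smooth $H_r$ on any simply connected $U$.
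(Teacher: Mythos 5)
Your proposal is correct and follows essentially the same route as the paper's proof: induction, a truncated inverse modified equation whose numerical flow is symplectic by the inductive hypothesis, comparison with the exact (symplectic) flow at the first non-matching power of $h$ to conclude that $Jf_r'(y)$ is symmetric, and then the Integrability Lemma (your Poincar\'e lemma step) to produce $H_r$. The only differences are cosmetic: an index shift, and that you spell out the two technical points (why only $h\,d_1$ contributes at order $h^r$, and the local-to-global passage) that the paper leaves implicit.
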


\begin{proof}
According to Theorem \ref{the:modiode}, $f_1 =J^{-1}\nabla H(y)$. Assume that $f_j(y)=J^{-1}\nabla H_j(y)$ for $j=1,2,\cdots,r$, we need to prove the existence of $H_{r+1}(y)$ satisfying $f_{r+1}(y)=J^{-1}\nabla H_{r+1}(y)$.

Consider the truncated inverse modified equation
\begin{equation*}
\dot{\tilde{y}}=f(\tilde{y})+hf_2(\tilde{y})+h^2f_3(\tilde{y})+\cdots+h^{r-1}f_r(\tilde{y}),
\end{equation*}
which has the Hamiltonian $H(y)+hH_2(y)+\cdots+h^{r-1}H_r(y)$ by induction. Its numerical flow $\Phi_{r,h}(y)$ satisfies
\begin{equation*}\phi_h(y)= \Phi_h(f_h,y) = \Phi_{r,h}(y)+h^{r+1}f_{r+1}(y)+O(h^{r+2}).\end{equation*} And
\begin{equation*}\phi_h'(y)= \Phi_{r,h}'(y)+h^{r+1}f_{r+1}'(y)+O(h^{r+2}),\end{equation*}
where $\phi_h(y)$ and $\Phi_{r,h}(y)$ are symplectic maps, and $\Phi_{r,h}'(y)=I+O(h)$. Therefore
\begin{equation*}J=\phi_h'(y)^TJ\phi_h'(y)=J+h^{r+1}(f_{r+1}'(y)^TJ+Jf_{r+1}'(y))+O(h^{r+2}).\end{equation*}
Consequently, $f_{r+1}'(y)^TJ+Jf_{r+1}'(y)=0$, in other words, $Jf_{r+1}'(y)$ is symmetric. And the existence of $H_{r+1}(y)$ satisfying
\begin{equation*}
f_{r+1}'(y)= J^{-1}\nabla H_{r+1}(y)
\end{equation*}
follows from the Integrability Lemma \cite[Lemma \uppercase\expandafter{\romannumeral6}.2.7]{hairer2006geometric}.
\end{proof}

\section{Numerical results}
\label{sec:num_res}
\subsection{Target error}
In this subsection, we check the target error of HNet. For HNet based on symplectic integrator, let $net(y)$ be the trained network, $H(x)$ be the true target and $H_h(x)$ be the network target. Then
\begin{equation*}
net(y) - H(y) = (H_h(y)-H(y)) + (net(y) - H_h(y)) = \uppercase\expandafter{\romannumeral1} + \uppercase\expandafter{\romannumeral2},
\end{equation*}
where $\uppercase\expandafter{\romannumeral2}$ depends on the performance of the trained network, and the target error $\uppercase\expandafter{\romannumeral1}$ becomes the main factor of the expected error.

The mathematical pendulum (mass $m=1$, massless rod of length $l=1$, gravitational acceleration $g=1$) is a system having the Hamiltonian
\begin{equation*}
H(p,q)=\frac{1}{2}p^2 - \cos q,
\end{equation*}
and the differential equation is of the form
\begin{equation*}
\left\{ \begin{aligned}\dot{p}&=-\sin q \\ \dot{q}&=p\end{aligned} \right..
\end{equation*}

The training data of HNet is $\mathcal{T}=\{(y_{i},\phi_h(y_{i}))\}_{1}^{4000}$, where $y_i=(p_i,q_i)$ are randomly generated from compact set $[-\pi/2,\pi/2]\times[-\sqrt{2},\sqrt{2}]$ and $\phi_h(y)$ is the exact flow, $h=0.1$. The test data is generated in the same way. The chosen integrator is the symplectic Euler method
\begin{equation*}
\begin{aligned}
\bar{p}&= p - h \frac{\partial H(\bar{p},q)}{\partial q}\\
\bar{q}&=q + h \frac{\partial H(\bar{p},q)}{\partial p}
\end{aligned}
\end{equation*}
which is of order 1. Compute the truncations of the inverse modified equation of order 1 and 2, denoted as
\begin{equation*}
\begin{aligned}
&MH1(p,q)=\frac{1}{2}p^2 - \cos q+ \frac{h}{2}p\sin q,\\
&MH2(p,q)=\frac{1}{2}p^2 - \cos q+ \frac{h}{2}p\sin q + \frac{h^2}{6}(p^2\cos q+\sin^2 q).
\end{aligned}
\end{equation*}
The loss function of HNet is
\begin{equation*}
\frac{1}{4000}\sum_{i=1}^{4000} (\frac{\tilde{p}_i- p_i}{h} +  \frac{\partial H(\tilde{p}_i,q_i)}{\partial q})^2 + (\frac{\tilde{q}_i- q_i}{h} -  \frac{\partial H(\tilde{p}_i,q_i)}{\partial p})^2,
\end{equation*}
where $(\tilde{p}_i,\tilde{q}_i) = \phi_h(p_i,q_i)$.
\begin{table}[!htb]
    \centering
    \begin{tabular}{|c|c|c|}
    \hline
         & Training loss & Test loss  \\
     \hline
        $net(p,q)$ & $3.7\times10^{-7}$ & $3.2\times10^{-7}$   \\
     \hline
        $H(p,q)$ & $1.0\times10^{-3}$ & $1.0\times10^{-3}$ \\
     \hline
        $MH1(p,q)$ & $2.4\times10^{-6}$ & $2.1\times10^{-6}$ \\
     \hline
        $MH2(p,q)$ & $8.2\times10^{-9}$ & $9.1\times10^{-9}$ \\
     \hline
    \end{tabular}
    \caption{The training loss and test loss of HNet and three different Hamiltonians.}
    \label{tab:errMSE}
\end{table}
\begin{figure}[!htb]
    \centering
    \includegraphics[width=0.98\textwidth]{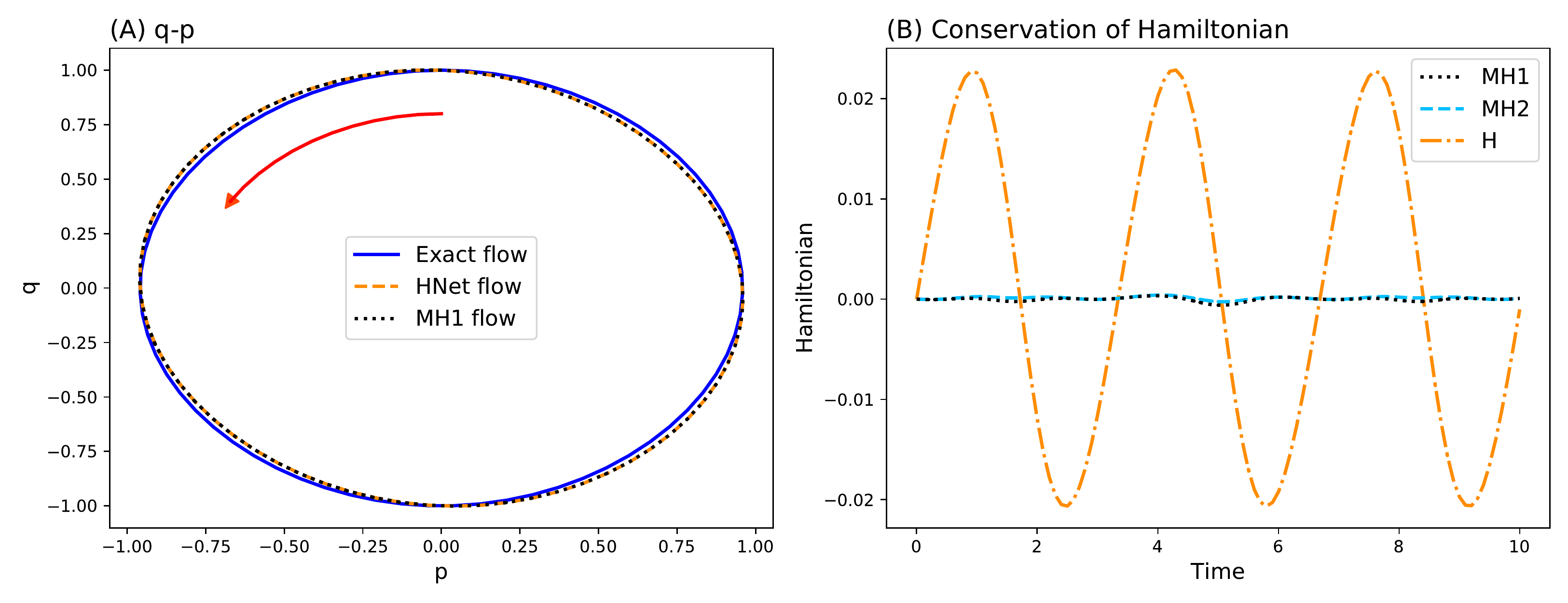}
    \caption{\textbf{Pendulum.} (\textbf{A})
    Three flows of the original pendulum system, the learned HNet and the 1-order modified system $MH1$ respectively. The HNet correctly captures the flow of the modified system rather than the original pendulum system. (\textbf{B}) Conservation of the original Hamiltonian of pendulum compares to the two corresponding truncated Hamiltonians of the inverse modified equation ($MH1$ and $MH2$). The HNet nearly conserves the Hamiltonian of the modified system rather than the original pendulum system.}
    \label{fig:SEPD}
\end{figure}
Let $net(p,q)$ be the trained HNet. The training loss and test loss of $net(p,q)$, the original Hamiltonian $H(p,q)$, and the truncated inverse modified equation $MH1(p,q)$, $MH2(p,q)$ are given in Table~\ref{tab:errMSE}. The loss of $H(p, q)$ is much more larger than others, and the loss of modified Hamiltonian markedly decreases with the increasing of the truncation order. Fig.~\ref{fig:SEPD} presents three phase flows starting at $(0, 1)$ for $t=10$, and also show the conservation of the three Hamiltonians. The above results show that the network target is indeed the calculated Hamiltonian of the inverse modified equation rather than the original Hamiltonian.

\subsection{Symplectic HNets}

We call the HNet based on symplectic (non-symplectic) integrator as symplectic (non-symplectic) HNet. In this subsection, we will confirm that the symplectic HNets have better generalization ability and higher accuracy than the non-symplectic HNets in addressing predicting issues. In experiments, we
use a series of phase points $\{x_{i}\}_{i=1}^{n}$ with time step $h$ as the training data, i.e., $\mathcal{T}=\{(x_{i-1},x_{i})\}_{1}^{n}$ subject to $x_i=\phi_h(x_{i-1})$. Here symplectic HNets choose the implicit midpoint rule \cite[Chapter \uppercase\expandafter{\romannumeral2}.1]{hairer2006geometric}
\begin{equation*}
\bar{y} = y + h J^{-1} \nabla H(\frac{\bar{y}+y}{2}),
\end{equation*}
while non-symplectic HNets choose the implicit trapezoidal rule \cite[Chapter \uppercase\expandafter{\romannumeral2}.1]{hairer2006geometric}
\begin{equation*}
\bar{y} = y + \frac{h}{2}( J^{-1} \nabla H(\bar{y}) + J^{-1} \nabla H(y)).
\end{equation*}
Note that both of them are of order 2.
\subsubsection{Pendulum}
\begin{figure}[!htb]
    \centering
    \includegraphics[width=0.98\textwidth]{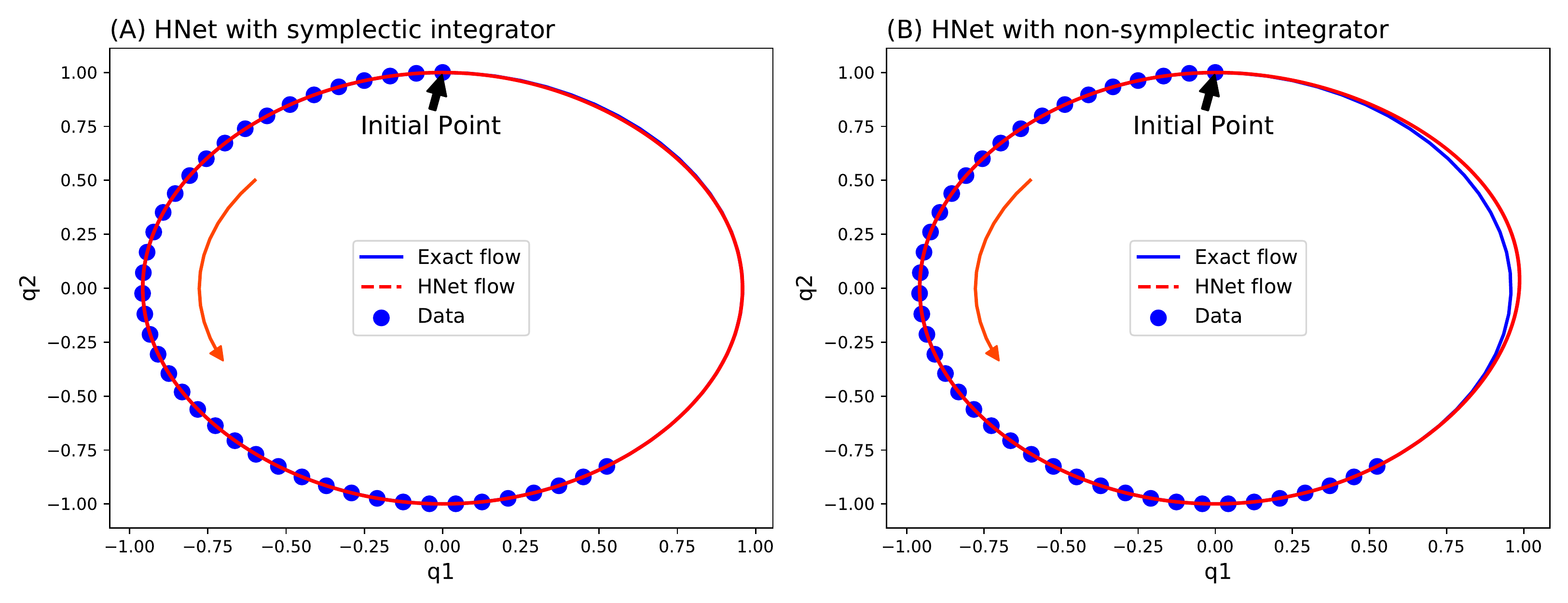}
    \caption{\textbf{Pendulum.} Comparison between the predicted flows of the symplectic HNet and the non-symplectic HNet. (\textbf{A}) shows the flow obtained by symplectic HNet, which discovers the unknown trajectory successfully. (\textbf{B}) shows the flow obtained by non-symplectic HNet, which deviates from the true trajectory.}
    \label{fig:PDqp}
\end{figure}
\begin{figure}[!htb]
    \centering
    \includegraphics[width=0.98\textwidth]{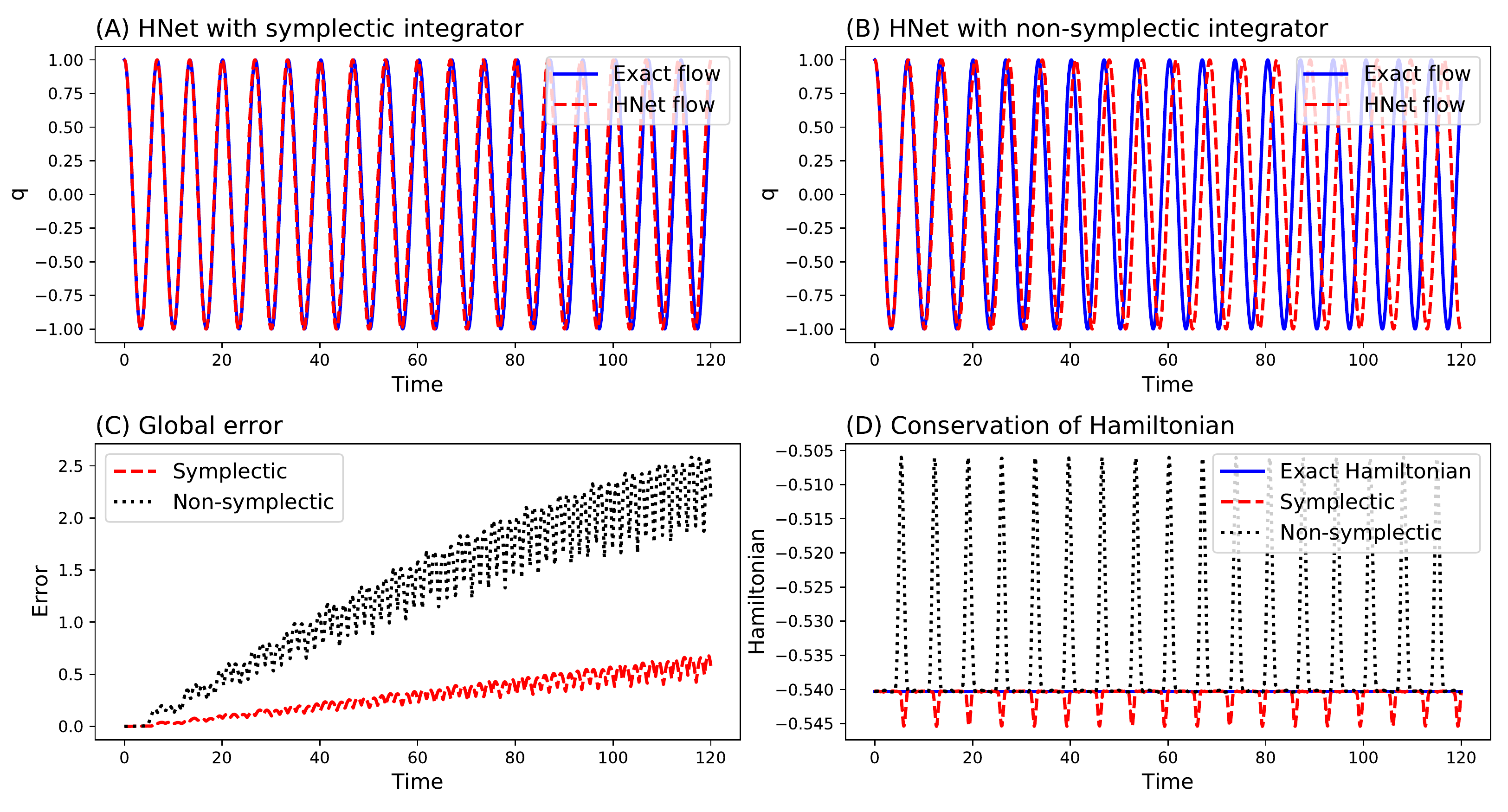}
    \caption{\textbf{Pendulum.} (\textbf{A}, \textbf{B}) Positions obtained by the symplectic HNet and the non-symplectic HNet. (\textbf{C}, \textbf{D}) Global error and conservation of Hamiltonian for HNets. Symplectic HNet gives comparatively accurate result.}
    \label{fig:PDqt}
\end{figure}
For pendulum, we obtain the flow starting from $x_0 = (0,1)$ with 40 points and time step $h=0.1$, as the training data, i.e., $\mathcal{T}=\{(x_{i-1},x_{i})\}_{1}^{40}$, $i=1,\cdots,40$. As shown in Fig.~\ref{fig:PDqp}, \ref{fig:PDqt}, the symplectic HNet reproduces the phase flow more accurately, which has lower global error and more accurate conservation of Hamiltonian.

\subsubsection{Kepler problem}
\begin{figure}[!htb]
    \centering
    \includegraphics[width=0.98\textwidth]{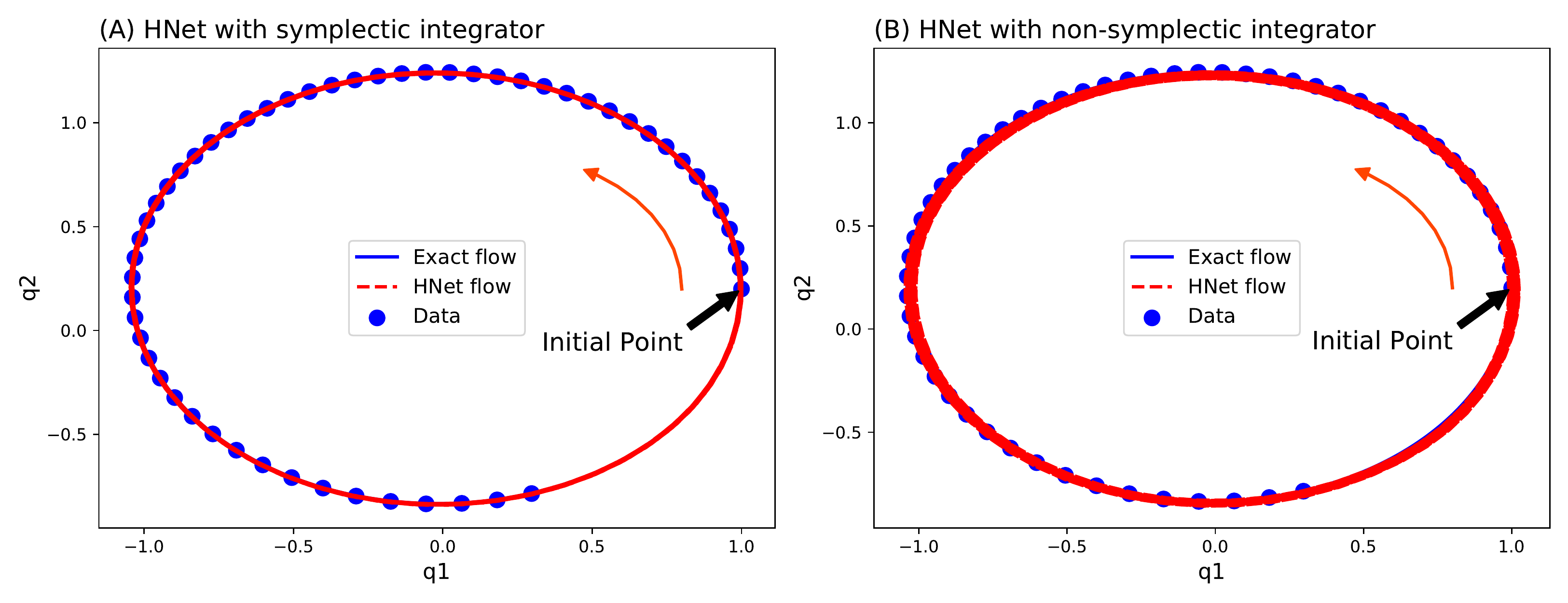}
    \caption{\textbf{Kepler problem.} Comparison between the predicted flows of the symplectic HNet and the non-symplectic HNet. (\textbf{A}) shows the flow obtained by symplectic HNet, which discovers the unknown trajectory successfully. (\textbf{B}) shows the flow obtained by non-symplectic HNet, which deviates from the true trajectory over time.}
    \label{fig:OSqp}
\end{figure}
\begin{figure}[!htb]
    \centering
    \includegraphics[width=0.98\textwidth]{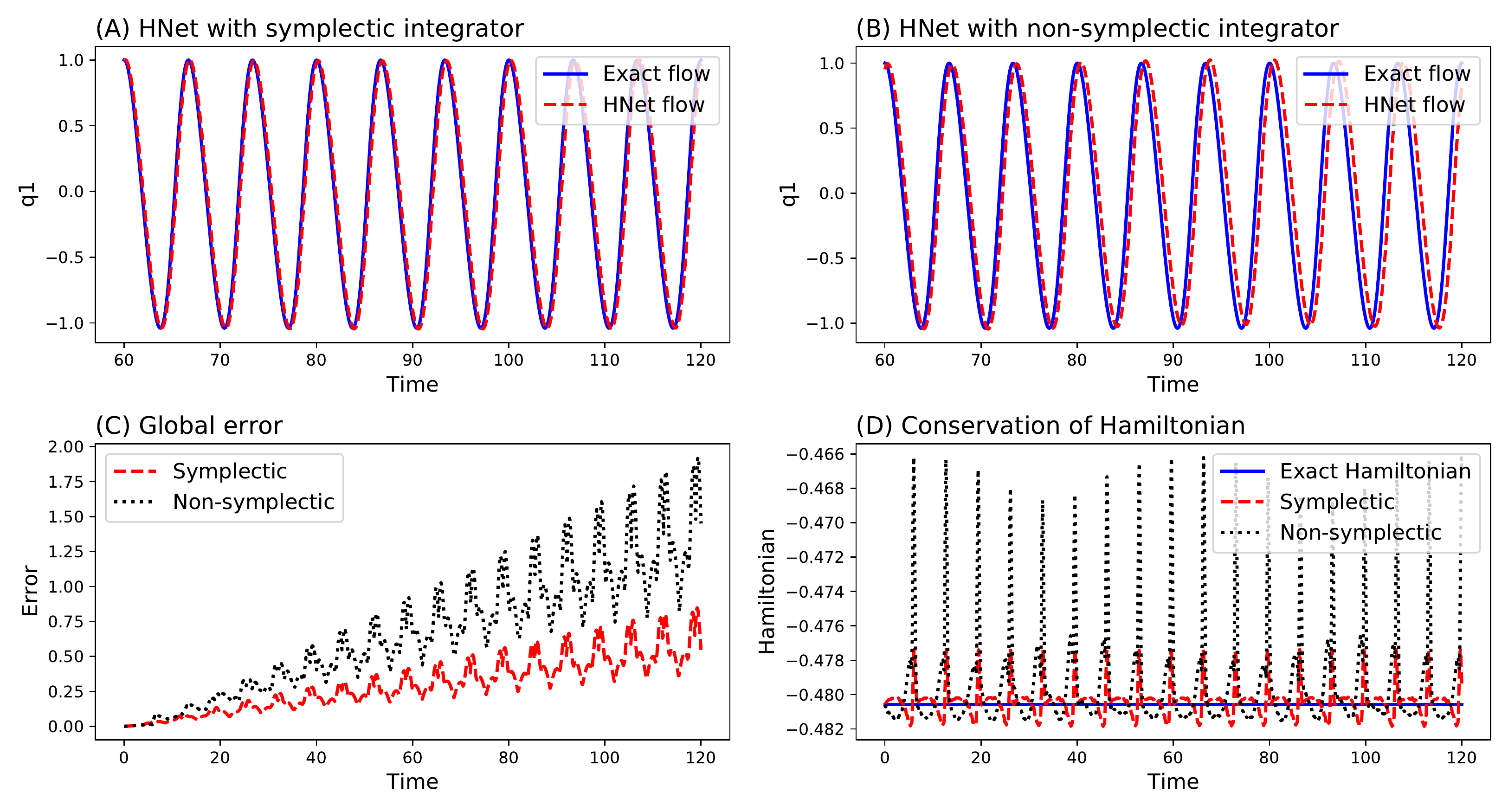}
    \caption{\textbf{Kepler problem. }(\textbf{A}, \textbf{B}) Positions obtained by the symplectic HNet and the non-symplectic HNet. Both HNets reproduce the phase portrait while symplectic HNets more accurately over time. (\textbf{C}, \textbf{D}) Global error and conservation of Hamiltonian for HNets. Symplectic HNet gives comparatively accurate result.}
    \label{fig:OSqt}
\end{figure}
Now we consider a four-dimensional system, the Kepler problem (mass $m = 1$, $M = 1$, gravitational constant = 1), which has the Hamiltonian
\begin{equation*}
H(\mathbf{p},\mathbf{q})=H(p_{1},p_{2},q_{1},q_{2})=\frac{1}{2}(p_{1}^{2}+p_{2}^{2})-\frac{1}{\sqrt{q_{1}^{2}+q_{2}^{2}}}.
\end{equation*}
We obtain the flow starting from $x_0 = (0,1,1,0.2)$ with 55 points and time step $h=0.1$, as the training data, i.e., $\mathcal{T}=\{(x_{i-1},x_{i})\}_{1}^{55}$, $ i=1,\cdots,55$. As shown in Fig.~\ref{fig:OSqp}, \ref{fig:OSqt}, the symplectic HNet reproduces the phase flow and captures the dynamic more accurately, which has lower global error and more accurate conservation of Hamiltonian.

\section{Conclusion}
This work explains the influences of different integrators as hyper-parameters on the HNets through error analysis. The target error is introduced to describe the gap between the network target and the true target, and the inverse modified equation is proposed to calculate the network target. The target error depends on the accuracy order of the integrator. Theoretical analysis shows that the HNets based on symplectic integrators possess network targets while non-symplectic integrators cannot guarantee the existence of the network targets. Numerical results have confirmed our theoretical analysis. HNets based on the symplectic integrators are learning the network targets rather than the Hamiltonian of the original system. In addressing predicting issues, symplectic HNets have better generalization ability and higher accuracy.

\bibliographystyle{abbrv}
\bibliography{main}

\end{document}